\newcommand{\be}{\begin{equation}}
\newcommand{\ee}{\end{equation}}
\numberwithin{equation}{section}
\numberwithin{figure}{section}
\newtheorem{theorem}{Theorem}[section]
\newtheorem{lemma}[theorem]{Lemma}
\begin{document}
\vglue-1cm \hskip1cm
\title[Monotonicity of the period map]{Monotonicity of the period map for the equation $-\varphi''+\varphi-\varphi^{k}=0$.}

\begin{center}

\subjclass[2000]{ 35B10, 35J61, 47A75.}

\keywords{Periodicity of the period map, Floquet theory, Periodic solutions.\\
\indent $^*$Corresponding author}

\maketitle

{\bf Giovana Alves }

{Centro de Ci\^encias Exatas Naturais e Tecnol\'ogicas\\
Universidade Estadual da Regi\~ao Tocantina do Maranh\~ao\\
	Imperatriz, Maranh\~ao, 65900-000, Brazil.}\\
{ a$\_$giovanaalves@yahoo.com.br}

{\bf F\'abio Natali$^*$}

{Departamento de Matem\'atica\\
Universidade Estadual de Maring\'a\\
Maring\'a, Paran\'a, CEP 87020-900, Brazil.}\\
{ fmanatali@uem.br}

\vspace{3mm}

\end{center}

\begin{abstract}
In this paper, we establish the monotonicity of the period map in terms of the energy levels for certain periodic solutions of the equation $-\varphi''+\varphi-\varphi^{k}=0$, where $k>1$ is a real number. We present a new approach to demonstrate this property, utilizing spectral information of the corresponding linearized operator around the periodic solution and tools related to Floquet theory.
\end{abstract}

\section{Introduction}
Let $\varphi:\mathbb{R}\rightarrow\mathbb{R}$ be a periodic solution of the equation
\be
-\varphi''+  \varphi -\varphi^k = 0, \label{ode1}
\ee
where $k>1$ is a real number and $\varphi=\varphi(x)$ is a function depending on $x\in\mathbb{R}$. We easily verify that , the function defined by
\begin{equation}\label{energyODE1}
	\mathcal{E}(\varphi, \xi) =  \frac{\xi^2}{2} -\frac{\varphi^2}{2}+\frac{\varphi^{k+1}}{k+1},
\end{equation}
is a first integral for the two dimensional system associated to equation \eqref{ode1}, where $\xi=\varphi'$. In other words, this means that the pair $(\varphi,\xi)$ satisfies $\mathcal{E}(\varphi,\xi)=B$ for convenient values $B\in\mathbb{R}$. When $k>1$ and $\frac{1-k}{2(k+1)}<B<0$, we obtain periodic orbits that spinning around the center point $(1,0)$. 
When $k>1$ is particular an odd number, we also obtain periodic solutions but for all $B>0$. In this case, the periodic solutions change their sign, and all initial conditions associated with the solution are located outside the separatrix curve. (a curve that converges to the saddle point $(0,0)$ in the associated phase portrait).\\
\indent The period map $L=L(B)$ of the periodic solution $\varphi$ depends on $B$ and it can be expressed by
\be
L=\displaystyle 2\int_{b_1}^{b_2}\frac{dh}{\sqrt{-\frac{2h^{k+1}}{k+1}+ h^2+2B}},
\label{persol1}\ee
where $b_1=\displaystyle\min_{x\in [0,L]}\varphi(x)$ and $b_2=\displaystyle\max_{x\in [0,L]}\varphi(x)$. This expression is not very handy to obtain the smoothness of the period function $L$ in terms of $B$ since $b_1$ and $b_2$ are zeroes of the function $G(\varphi)=-\frac{2\varphi^{k+2}}{k+2}+ \varphi^2+2B$. However, from standard theory of ODE, the solution $\varphi$ of $(\ref{ode1})$ depends smoothly on the initial conditions $\varphi(t_0)=\varphi_0$ and $\varphi'(t_0)=\varphi_1$, $t_0\in\mathbb{R}$ and in particular, $\varphi$ and $L$ also depend smoothly on $B$ in a convenient set of parameters (see \cite[Chapter 1, Theorem 3.3]{jack}).\\
\indent With the smoothness of the period map in hands, it is important to know the behaviour of this function in terms of the energy levels $B$ or even in terms of the initial data. In fact, questions related to the behaviour of the period function have attracted a considerable attention in the last decades and the study of the monotonicity of the period map is important for several reasons. For instance, if the period map is monotonic, solutions with large periods correspond to initial conditions that are far away from the critical points. On the other hand, if the period map is not monotonic, then the periodic solutions may exhibit bifurcations points in the associated phase portrait and the uniqueness of solutions may be not valid.\\
\indent Next, we present some contributors. Loud \cite{loud} and Urabe \cite{urabe} have proved the isochronicity\footnote{Just to make clear the comprehension of the reader, a center point is said to be \textit{isochronous} when all periodic solutions  turning around the center point have the same period} of the period function for specific families of planar vector fields. Using tools of ODE, Schaaf \cite{schaaf} has established a general set of sufficient conditions for the monotonicity of the period function for a class of Hamiltonian systems of the form
\be\label{hamiltgen}
\left\{\begin{array}{llll}\varphi'=-f(\psi),\\
\psi'=g(\varphi).\end{array}
\right.
\ee
By assuming suitable assumptions on the smooth functions $f$ and $g$, it is possible to prove that the period function is strictly increasing for periodic solutions  turning around the center point $(0,0)$ (see also Rothe \cite{rothe2} for an extension of the results in \cite{schaaf}).\\
\indent  Cima, Gasull and Ma\~nosas \cite{cima} (see also Coppel and Gavrilov \cite{coppel}) studied the same system in $(\ref{hamiltgen})$ and they characterized the limiting behaviour of the period $L$ at infinity when the origin is a global center. In adddition, they apply this result to prove that there are no nonlinear polynomial isochronous centers in this family, showing the monotonicity of the period in terms of the energy levels $B$. Rothe \cite{rothe1} and Waldvogel \cite{waldvogel}  showed that all Lotka-Volterra systems of the form
$$\varphi’=\varphi(a-b\psi), \quad \psi’=-\psi(c-d\varphi),$$
for $a$ and $b$ in a suitable set of conditions, have monotonic period functions. Chow and Wang \cite{chow} gave some characterization of the first and second derivatives for the period function in terms of $B$ for the equation
$$\varphi''+g(\varphi)=0,$$
where $g$ satisfies a suitable set of conditions. As an application, they determine the monotonicity of the period function when $g(s)=e^s-1$. Using a different approach, Chicone \cite{chicone2} also obtained part of the results in \cite{chow}  and gave a general condition for the monotonicity of period functions for a class of planar Hamiltonian systems. In addition, \cite{chicone2} was the inspiration for the study in Yagasaki \cite{yagasaki2} to prove the monotonicity of the period function for the equation $(\ref{ode1})$. In this work, the authors also considered $k>1$ a real number and the periodic solutions turning around the center point $(1,0)$. A revisited proof of 
\cite{yagasaki2} has been established by Benguria, Depassier and Loss in \cite{benguria}.\\
\indent In our approach, we use a different method to establish the monotonicity of the period map for positive solutions as determined in \cite{yagasaki2} and we realize that when $k>1$ is in particular an odd number, we can use the same method to determine the same behaviour for the periodic solutions of the equation $(\ref{ode1})$ that change their sign. As far as we know, this last question never been treated in the current literature and our intention is to give a new perspective to solve this kind of problems since our approach can be used in other models. \\
\indent Without further ado, we will describe our methods. Indeed, let us consider the  linearized operator around $\varphi$ given by 
\begin{equation}\label{operator1}
\mathcal{L}(y)=- y''+ y-k\varphi^{k-1}y,
\end{equation}
defined in $L_{per}^2([0,L])$ with domain $H_{per}^2([0,L])$. Deriving equation $(\ref{ode1})$ with respect to $x$, we obtain that $\varphi'$ satisfies $\mathcal{L}\varphi'=0$, so that $\varphi'$ is a periodic element which solves the equation 
\be\label{fundset}
- y''+ y-k\varphi^{k-1}y=0.
\ee Since $(\ref{fundset})$ is a second order linear equation, there exists another solution $\bar{y}$  for the equation $\mathcal{L}\bar{y}=0$ and 
$\{\varphi',\bar{y}\}$ is a fundamental set of solutions for the equation $(\ref{fundset})$. Using tools of the Floquet theory (see \cite{magnus}), we see that function $\bar{y}$ can be periodic or not and it is related with $\varphi'$ through the equality (see \cite{magnus})
\be\label{theta2}
\bar{y}(x+L)=\bar{y}(x)+\theta\varphi'(x),
\ee
for all $x\in \mathbb{R}$. Constant $\theta$ is a real parameter and it determines if $\bar{y}$ is periodic or not in the sense that $\bar{y}$ is periodic if and only if $\theta=0$.\\
\indent Our strategy to prove the monotonicity of $L$ in terms of $B$ is then to show that $\bar{y}$ is always a non-periodic function by proving that the kernel of $\mathcal{L}$ in $(\ref{operator})$ in $(\ref{operator1})$ defined in $L_{per}^2([0,L])$ with domain $H_{per}^2([0,L])$ is always simple  (see Lemma $\ref{teospec}$ and Lemma $\ref{condzero}$). After that, we need to use the smoothness of the period function with respect to $B$ to prove that $\frac{\partial L}{\partial B}=-\theta$ (see Lemma $\ref{teo1}$). Thus, since $\bar{y}$ is always non-periodic, we can conclude that $\theta \neq 0$, and the monotonicity of the period map is determined. Our main result can be summarized as follows:
\begin{theorem}\label{mainT}
Let $L=L(B)$ be the period function associated to the periodic solution $\varphi$ of the equation $(\ref{ode1})$.\\
i) If $k>1$ is odd and $\varphi$ is a solution that changes its sign, then $\frac{\partial L}{\partial B}<0$.\\
ii) If $k>1$ and $\varphi$ is a positive non-constant periodic solution, then $\frac{\partial L}{\partial B}>0$.
\end{theorem}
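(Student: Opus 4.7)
The plan is to execute the three-step program outlined in the introduction. First, by Lemma~\ref{teospec} combined with Lemma~\ref{condzero}, the kernel of $\mathcal{L}$ on $L^2_{per}([0,L])$ is one-dimensional, generated by $\varphi'$. Therefore the companion solution $\bar{y}$ of $\mathcal{L}y = 0$ is non-periodic, which by the Floquet decomposition $(\ref{theta2})$ forces $\theta \neq 0$.

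Second, I would apply Lemma~\ref{teo1}, which asserts $\partial L/\partial B = -\theta$. The idea behind this identity is to set $w(x;B) = \partial_B \varphi(x;B)$ and differentiate $(\ref{ode1})$ with respect to $B$, obtaining $\mathcal{L}w = 0$; thus $w$ lies in the span of $\{\varphi',\bar{y}\}$. Differentiating the periodicity relation $\varphi(x+L(B);B) = \varphi(x;B)$ with respect to $B$ yields
\begin{equation*}
w(x+L)-w(x) = -L'(B)\,\varphi'(x),
\end{equation*}
and matching this against $(\ref{theta2})$, after normalizing $\bar{y}$ so that the $\bar{y}$-component of $w$ has coefficient one, produces the claimed identity. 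Combined with $\theta \neq 0$, this already gives strict monotonicity of $L$ on each connected component of admissible energies.

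The main obstacle is then to fix the sign of $L'(B)$ in each regime. Since $B \mapsto L(B)$ is $C^1$ and nowhere critical, its sign is constant on each admissible interval, and I would pin it down by analyzing the endpoints. For case (ii), with $\tfrac{1-k}{2(k+1)} < B < 0$: as $B \to \tfrac{1-k}{2(k+1)}^+$ the orbit contracts to the center $(1,0)$, and linearization of $(\ref{ode1})$ around $\varphi=1$ gives $\psi'' + (k-1)\psi = 0$, hence $L \to 2\pi/\sqrt{k-1}$; while as $B \to 0^-$ the orbit approaches the homoclinic loop at the saddle $(0,0)$ and $L \to \infty$. A continuous, nowhere-vanishing $L$ joining a finite value to $+\infty$ is strictly increasing, so $L'(B) > 0$. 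For case (i), with $k$ odd and $B > 0$: as $B \to 0^+$ the orbit again approaches the separatrix through $(0,0)$ and $L \to \infty$; as $B \to \infty$, the scaling $\varphi(x) = \mu \Phi(\mu^{(k-1)/2} x)$ with $\mu\to\infty$ reduces $(\ref{ode1})$ to $\Phi'' + \Phi^k = 0$ at leading order, whose period in the new variable is finite, so $L = O(\mu^{-(k-1)/2}) \to 0$. Monotonicity then forces $L'(B) < 0$. This completes the argument.
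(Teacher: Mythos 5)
Your proof is correct in outline, but it diverges from the paper at the decisive step, namely how the \emph{sign} of $L'(B)$ is fixed. The paper never argues by continuity from endpoint asymptotics: it uses Lemma~\ref{specprop} (Neves' theorem), which ties the sign of the Floquet constant $\theta$ to the inertia of $\mathcal{L}$. Concretely, Lemma~\ref{condzero} shows $n(\mathcal{L})=2$ and $z(\mathcal{L})=1$ for the sign-changing solutions, which by Lemma~\ref{specprop} forces $\theta>0$ and hence $\partial L/\partial B=-\theta<0$; Lemma~\ref{teospec} shows $n(\mathcal{L})=z(\mathcal{L})=1$ for positive solutions, forcing $\theta<0$ and hence $\partial L/\partial B>0$. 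This is a purely local, spectral determination of the sign, and it is precisely why the paper goes to the trouble of computing the number of negative eigenvalues rather than just the kernel. Your route uses only kernel simplicity (so $\theta\neq 0$ and $L'$ never vanishes) and then pins the sign down globally from the behaviour of $L$ at the ends of the parameter interval; in fact only the separatrix limit $L\to\infty$ as $B\to 0^{\mp}$ is really needed, since a monotone function cannot blow up at the endpoint toward which it decreases. This is legitimate, but those limits are only stated informally in Section 2 (``both convergences suggest\dots''), so to be self-contained you would need to prove at least the separatrix asymptotics. What your approach buys is that you never need the count of negative eigenvalues nor the Neves result; what the paper's approach buys is that the sign is read off locally at each $B$ with no boundary analysis at all. (Two minor remarks: your limit $2\pi/\sqrt{k-1}$ at the center is the correct value for general $k$, though only its finiteness would matter; and your sketch of Lemma~\ref{teo1} via differentiating $\varphi(x+L(B);B)=\varphi(x;B)$ is consistent with, though different in detail from, the paper's computation based on differentiating $\varphi'(L)=0$ and identifying $\bar{y}$ with $\pm\partial_B\varphi$.)
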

%\begin{remark}
%In item ii) we have assumed $k>1$ integer and odd to obtain periodic sign changed solutions for the equation $(\ref{ode1})$. It is important to highlight that our method also works eventually for all real number $k>1$ and when $(\ref{ode1})$ admits periodic sign changed solutions located outside the separatrix. 
%\end{remark}
\indent Our paper is organized as follows. In Section 2 we present a brief explanation concerning the existence of positive and solutions that change their sign. Section 3 is devoted to prove Theorem $\ref{mainT}$.\\

\textbf{Notation.} Here, we introduce the basic notation concerning the periodic Sobolev spaces that will be useful in our paper. For a more complete introduction to these spaces we refer the reader to \cite{Iorio}. In fact, by $L^2_{per}([0,L])$, we denote the space of all square integrable functions which are $L$-periodic. For $m\geq0$ integer, the Sobolev space
$H^m_{per}([0,L])$
is the set of all periodic functions such that
$$
\|f\|_{H^m_{per}}^2:= ||f||_{L_{per}^2}^2+\sum_{j=1}^m||f^{(j)}||_{L_{per}^2}^2 <\infty,
$$
where $f^{(j)}$, $j=1,\cdots,m$, indicates the $j$-th derivative of $f$ in the sense of periodic distributions (see \cite{Iorio} for further details). The space $H^m_{per}([0,L])$ is a  Hilbert space with natural inner product denoted by $(\cdot, \cdot)_{H_{per}^m}$. When $m=0$, the space $H^m_{per}([0,L])$ is isometrically isomorphic to the space  $L^2_{per}([0,L])$. The norm and inner product in $L^2_{per}([0,L])$ will be denoted by $\|\cdot \|_{L_{per}^2}$ and $(\cdot, \cdot)_{L_{per}^2}$, respectively.

\section{Existence of periodic solutions via planar analysis.}
Our purpose in this section is to present some facts concerning the existence of
periodic solutions for the nonlinear ODE given by
\be
-\varphi''+  \varphi -\varphi^k = 0, \label{ode}
\ee
where $k>1$ is a real number. 

Equation (\ref{ode}) is
conservative with 
\begin{equation}\label{energyODE}
\mathcal{E}(\varphi, \xi) =  \frac{\xi^2}{2} -\frac{\varphi^2}{2}+\frac{\varphi^{k+1}}{k+1},
\end{equation}
as a first integral,  where $\xi=\varphi'$, and thus its solutions are contained on the level curves of the energy.\\
\indent According to the classical ODE theory (see \cite{chicone}, \cite{jack} and \cite{natali1} for further details), $\varphi$ is a periodic solution of the equation $(\ref{ode})$ if and only if $(\varphi,\varphi')$ is a periodic orbit of the planar differential system
\begin{equation}\label{planarODE}
\left\{\begin{array}{lllll}
\varphi'=\xi,\\\\
\xi'=\varphi-\varphi^k.
\end{array}\right.
\end{equation}
\indent The periodic orbits for the equation $(\ref{planarODE})$ can be determined by considering the energy levels of the function $\mathcal{E}$ defined in $(\ref{energyODE})$. This means that the pair $(\varphi,\xi)$ satisfies the equation $\mathcal{E}(\varphi,\xi)=B$. If $k>1$ and $B\in \left(\frac{1-k}{2(k+1)},0\right)$, we obtain periodic orbits that turn round at the equilibrium points $(1,0)$ and these orbits are called \textit{positive} since they are associated with positive  periodic solutions of the equation $(\ref{ode})$. In our specific case, $(\ref{planarODE})$ admits at least two critical points, being a saddle point at the origin and a center point at $(1 ,0)$. According to the standard ODE theory, the periodic orbits emanate from the center points to the separatrix curve which is represented by a smooth solution $\widetilde{\varphi}:\mathbb{R}\rightarrow\mathbb{R}$ of $(\ref{ode})$ satisfying $\lim_{x\rightarrow \pm \infty}\widetilde{\varphi}^{(n)}(x)=0$ for all $n\in\mathbb{\mathbb{N}}$. %If $k$ is even and $B\in (B_{0},0)$, we also obtain periodic orbits which turn round at the equilibrium points $(1,0)$. In this case, we obtain that $(\ref{planarODE})$ has two critical points, being one saddle point at $(\varphi,\xi)=(0,0)$ and one center point at $(\varphi,\xi)=(1 ,0)$. Again, the periodic orbits emanate from the center point to the separatrix curve which is also represented by a smooth solution $\widetilde{\varphi}:\mathbb{R}\rightarrow\mathbb{R}$ of $(\ref{ode})$ satisfying $\lim_{x\rightarrow \pm \infty}\widetilde{\varphi}^{(n)}(x)=0$ for all $n\in\mathbb{\mathbb{N}}$.\\
In particular, when $k$ is odd, we see that the presence of two symmetric center points $(\pm 1,0)$ allows to conclude that the periodic orbits which turn around these points can be positive and negative\footnote{The terminology \textit{positive (negative) periodic orbit} means that the associated periodic solution for the equation $(\ref{ode})$ is positive (negative).}. For every orbit with initial condition outside the separatrix, we obtain periodic solutions for the equation $(\ref{ode})$ that change their sign \footnote{Here, we can also define an appropriate terminology for this kind of periodic solution as \textit{periodic orbit that changes its sign.}}. Indeed, if $B>0$ we also have periodic orbits and the corresponding periodic solutions  $\varphi$ with the zero mean property, that is, periodic solutions satisfying  $\int_0^L\varphi(x)dx=0$.
\begin{figure}[h]\label{fig1}
	\centering
	\begin{minipage}[h]{0.5\linewidth}
		\includegraphics[scale=0.21]{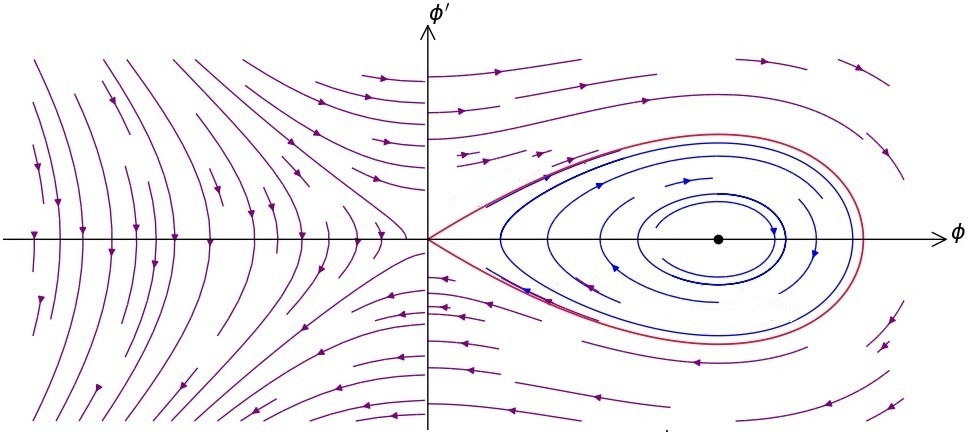}
	\end{minipage}\hfill
	\begin{minipage}[h]{0.5\linewidth}
		\includegraphics[scale=0.21]{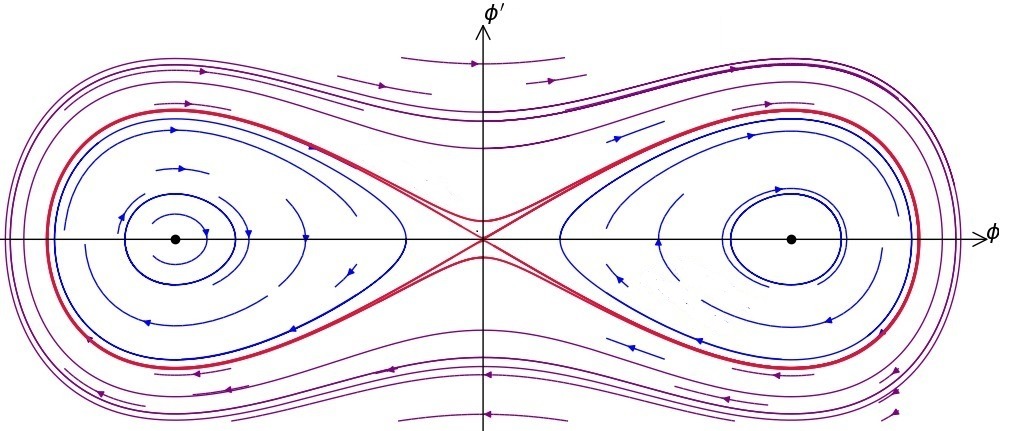}
	\end{minipage}
\caption{Periodic orbits for the equation $(\ref{ode})$. Left: positive and periodic orbits for the case $k>1$ that spinning around the center point $(1,0)$. Right: Positive and negative periodic orbits for the case $k>1$ and odd that spinning around the center points $(1,0)$ and $(-1,0)$, respectively. Right: periodic orbits that change their sign when the initial conditions are located outside the the separatrix.}
\end{figure}

Independently of the type of periodic solutions on which we are working, the period $L=L(B)$ of the solution $\varphi$ can be expressed (formally) as follows:

\be
L=\displaystyle 2\int_{b_1}^{b_2}\frac{dh}{\sqrt{-\frac{2h^{k+1}}{k+1}+ h^2+2B}},
\label{persol}\ee
where $b_1=\displaystyle\min_{x\in [0,L]}\varphi(x)$ and $b_2=\displaystyle\max_{x\in [0,L]}\varphi(x)$.\\
\indent On the other hand, the energy levels of the first integral $\mathcal{E}$ in $(\ref{energyODE})$ parametrize the unbounded set of periodic orbits $\{\Gamma_B\}_B$ which emanate from the separatrix curve. Thus, we can conclude that the set of smooth periodic solutions of $(\ref{ode})$ can be expressed by a smooth family $\varphi=\varphi_{B}$ which is parametrized by the value $B$. %For each $B\in (B_{0},0)\cup(0,+\infty)$ fixed, we also obtain that the orbit $\Gamma_B$ is equal to the corresponding periodic solution $\varphi$ and the period of the orbit is given by the smooth map in terms of $B\in (B_{0},0)\cup(0,+\infty)$
%\be
%\widetilde{L}=\int_{\Gamma_B}\frac{dh}{\chi}.
%\label{perorbit}
%\ee
%We see from $(\ref{persol})$ and $(\ref{perorbit})$ that $L=\widetilde{L}$. 
Moreover, for the case $B\in \left(\frac{1-k}{2(k+1)},0\right)$, we see that if $B\rightarrow \frac{1-k}{2(k+1)}$ thus $L\rightarrow  2\pi$, where $2\pi$ is the period of the equilibrium solution $(1,0)$ of the system $(\ref{planarODE})$ (see \cite[Section 2]{natali1}). On the other hand, if $B\rightarrow0$ we have $L\rightarrow +\infty$ and both convergences suggest that the period map is increasing in the interval $\left(\frac{1-k}{2(k+1)},0\right)$. On the other hand, when $B\in (0,+\infty)$, we see that if $B\rightarrow 0$, we have $L\rightarrow +\infty$, and if $B\rightarrow+\infty$, we obtain $L\rightarrow 0$. In this case, the suggestion is that the period map is decreasing.

\section{The monotonicity of the period map - proof of Theorem $\ref{mainT}$.}
%\subsection{Floquet Theory Framework}

\indent We need to recall some basic facts concerning Floquet's theory (see \cite{est} and \cite{magnus}). Let $Q$ be a smooth $L$-periodic function.  Consider $\mathcal{P}$ the Hill operator defined in $L_{per}^2([0,L])$, with domain $D(\mathcal{P})=H_{per}^2([0,L])$, given by
\begin{equation}\label{operatorP}
\mathcal{P}=-\partial_x^2+Q(x).
\end{equation}
The spectrum of $\mathcal{P}$  is formed by an unbounded sequence of
real eigenvalues
\be\label{seqeigen}
\lambda_0 < \lambda_1 \leq \lambda_2 \leq \lambda_3 \leq \lambda_4\leq
\cdots\; \leq \lambda_{2n-1} \leq \lambda_{2n}\; \cdots,
\ee
where equality means that $\lambda_{2n-1} = \lambda_{2n}$  is a
double eigenvalue. Moreover, according with the Oscillation Theorem  (see \cite{magnus}), the spectrum  is characterized by the number of zeros
of the eigenfunctions as: if $p$ is an eigenfunction associated to either $\lambda_{2n-1}$ or $\lambda_{2n}$, then $p$  has exactly
$2n$ zeros in the half-open
interval $[0, L)$. In particular, the even eigenfunction associated to the first eigenvalue $\lambda_0$ has no zeros in $[0, L]$.

Let $p$ be a nontrivial $L$-periodic solution of the equation
\begin{equation}\label{zeqL}
\mathcal{P}(p)=-p''+Q(x)p=0.
\end{equation}
Consider $\bar{y}$ the other solution of \eqref{zeqL} linearly independent of $p$.  There exists a constant $\theta$ (depending on $\bar{y}$ and $p$) such that (see \cite[page 5]{magnus})
\be\label{theta1}
\bar{y}(x+L)=\bar{y}(x)+\theta p(x).
\ee
Consequently, $\theta=0$ is a necessary and sufficient condition to all solutions of \eqref{zeqL} to be $L$-periodic. This criterion is very useful to establish if the kernel of $\mathcal{P}$ is one-dimensional or not.\\
\indent Next, let $\varphi=\varphi_{B}$ be any periodic solution of \eqref{ode}. Consider $\mathcal{L}=\mathcal{L}_{B}:H_{per}^2([0,L])\subset L_{per}^2([0,L])\rightarrow L_{per}^2([0,L])$ the linearized operator arising from the linearization of \eqref{ode} at $\varphi$, that is,
\begin{equation}\label{operator}
\mathcal{L}(y)=- y''+ y-k\varphi^{k-1}y.
\end{equation}
Clearly $\mathcal{P}=\mathcal{L}$, when $Q(x)y(x)= y(x)-k \varphi^{k-1}(x)y(x)$. By taking the derivative with respect to $x$ is \eqref{ode}, we have that $p:=\varphi'$ belongs to the kernel of the operator  $\mathcal{L}$. In addition, from our construction, $\varphi'$ has exactly
two zeros in the half-open interval $[0,L)$ (see Figure 2.1), which implies that zero is the second or the third eigenvalue of  $\mathcal{L}$ (see Oscillation's Theorem in \cite{magnus}) 

Next, let $\bar{y}=\bar{y}_{B}$ be the unique even solution of the initial-value problem
\be \left\{
\begin{array}{l}
  -\bar{y}'' +\bar{y}-k\varphi^{k-1}\bar{y} = 0 ,\\
 \bar{y}(0) =-\frac{1}{\varphi''(0)}, \\
\bar{y}'(0)=  0.
 \end{array} \right.
\label{y}
\ee
Since $\varphi'$ is an $L$-periodic solution for the equation in \eqref{y} and the corresponding Wronskian of $\varphi'$ and $\bar{y}$ is 1, there is a constant $\theta=\theta_{\bar{y}}$ such that
\begin{equation}\label{y1}
\bar{y}(x+L)=\bar{y}(x)+\theta \varphi'(x).
\end{equation}
By taking the derivative in this last expression and evaluating at $x=0$, we obtain
\be \label{theta}
\theta= \frac{\bar{y}'(L)}{\varphi''(0)}.
\ee

\indent The next result gives that it is possible to decide the exact position of the zero eigenvalue by knowing the precise sign of $\theta$ in $(\ref{theta1})$.

\begin{lemma}\label{specprop}
	i) If $\theta>0$, operator $\mathcal{L}$, defined in $L_{per}^2([0,L])$, with domain $H_{per}^2([0,L])$, has exactly two negative eigenvalue which are simple, a simple eigenvalue at zero and the rest of the spectrum is positive and bounded away from zero.\\
	ii) If $\theta<0$, the same operator $\mathcal{L}$ has exactly one negative eigenvalue which is simple, a simple eigenvalue at zero and the rest of the spectrum is positive and bounded away from zero.
\end{lemma}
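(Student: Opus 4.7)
The plan is to use the classical theory of Hill's discriminant to pin down which of $\lambda_1$ or $\lambda_2$ equals zero, and thereby count the negative eigenvalues. By the Oscillation Theorem and the fact that $\varphi' \in \ker\mathcal{L}$ has exactly two zeros in $[0,L)$, zero equals $\lambda_1$ or $\lambda_2$. When $\theta\neq 0$, relation \eqref{y1} shows $\bar y$ is not $L$-periodic, so the kernel of $\mathcal L$ is one-dimensional; thus $\lambda_1<\lambda_2$ and exactly one of them vanishes. Either one has a single negative eigenvalue $\lambda_0$ (when $0=\lambda_1$) or two negative eigenvalues $\lambda_0<\lambda_1<0$ (when $0=\lambda_2$); the simplicity of $\lambda_0$ and the discreteness of the rest of the spectrum are standard Hill-operator facts. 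Everything therefore reduces to determining the position of zero from the sign of $\theta$.

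To make this connection, I would introduce the Floquet discriminant $\Delta(\lambda):=\phi_1(L;\lambda)+\phi_2'(L;\lambda)$, where $\phi_1,\phi_2$ are the fundamental solutions of $-u''+(Q-\lambda)u=0$ with $\phi_1(0)=\phi_2'(0)=1$, $\phi_1'(0)=\phi_2(0)=0$; the periodic eigenvalues of $\mathcal L$ are precisely the real zeros of $\Delta-2$. Using $\varphi'(0)=0$ and the initial conditions for $\bar y$ in \eqref{y}, one has the identifications $\phi_1=-\varphi''(0)\bar y$ and $\phi_2=\varphi'/\varphi''(0)$. Combined with $\bar y(L)=\bar y(0)$ and $\bar y'(L)=\theta\varphi''(0)$ (both from \eqref{y1}), this yields $\phi_1(L)=1$, $\phi_2'(L)=1$, $\phi_2(L)=0$, $\phi_1'(L)=-\theta(\varphi''(0))^2$, confirming $\Delta(0)=2$.

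The central computation is $\Delta'(0)$. By variation of parameters, $v_j:=\partial_\lambda \phi_j\big|_{\lambda=0}$ is the unique solution of $\mathcal L v_j=\phi_j$ with $v_j(0)=v_j'(0)=0$. Plugging into $\Delta'(0)=v_1(L)+v_2'(L)$ and using the preceding values of $\phi_j(L),\phi_j'(L)$, a direct bookkeeping yields
\[
\Delta'(0)\;=\;-\theta\int_0^L(\varphi'(x))^2\,dx,
\]
so $\Delta'(0)$ and $\theta$ have opposite signs. A classical fact about the Hill discriminant is that between two consecutive \emph{distinct} periodic eigenvalues $\lambda_{2n-1}<\lambda_{2n}$ one has $\Delta\ge 2$, with equality only at the endpoints (this interval is a spectral gap of the Hill operator on the line). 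Hence $\Delta-2$ has simple zeros at these points with $\Delta'(\lambda_{2n-1})>0$ and $\Delta'(\lambda_{2n})<0$. Specializing to $n=1$: $\theta>0\Rightarrow\Delta'(0)<0\Rightarrow 0=\lambda_2$, producing two negative eigenvalues, and $\theta<0\Rightarrow\Delta'(0)>0\Rightarrow 0=\lambda_1$, producing one.

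The main technical obstacle I anticipate is the clean computation of $\Delta'(0)$, which requires tracking two different fundamental systems (the canonical $\{\phi_1,\phi_2\}$ and the Wronskian-normalized pair $\{\varphi',\bar y\}$) together with a careful application of variation of parameters. The other potentially subtle point is the justification that $\Delta>2$ strictly on the open interval $(\lambda_1,\lambda_2)$; this rests on the classical interlacing of periodic and antiperiodic Hill eigenvalues, which guarantees that $[\lambda_1,\lambda_2]$ sits in a gap of the $L^2(\mathbb R)$-spectrum where $\Delta\ge 2$.
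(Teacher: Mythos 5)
Your proposal is correct, and it is essentially the paper's approach: the paper proves this lemma only by citing Theorem 3.1 of Neves (\emph{J. Dyn. Diff. Equat.} 21 (2009)), and your discriminant argument is precisely the content of that cited result. Your key computation checks out with the paper's normalizations ($\phi_1=-\varphi''(0)\bar y$, $\phi_2=\varphi'/\varphi''(0)$, Wronskian $1$, hence $\Delta(0)=2$ and $\Delta'(0)=-\theta\int_0^L(\varphi'(x))^2\,dx$), and combined with the Oscillation Theorem and the band--gap structure of $\Delta$ it yields exactly the stated dichotomy.
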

\begin{proof}
	See Theorem 3.1 in \cite{neves}.
\end{proof}

%We also need the concept of isoinertial family of self-adjoint operators.

%\begin{definition}\label{defi12}
%Consider $r \in \mathcal{V}\subset \mathbb{R}^n$. The inertial index of the operator $\mathcal{P}$ is a pair ${\rm in}(\mathcal{P})=(n(\mathcal{P}),z(\mathcal{P}))\in \mathbb{N}^2$.
%\end{definition}

%\begin{definition}\label{defi1234}
%The family of linear operators  $\{\mathcal{P}\;  ; \; r \in \mathcal{V}\}$ is said to be isoinertial if
% ${\rm in}(\mathcal{P})$ is constant for any $r \in \mathcal{V}$.
%\end{definition}

%We have the following result concerning the behaviour of the non-positive spectrum of the linear operator  $\mathcal{P}$ defined in \eqref{operatorP} by knowing it for a fixed value $r_0 \in \mathcal{V}$.

%\begin{proposition}\label{isonertialP}
%Let $r \in \mathcal{V}$ and $\mathcal{P}$ be the Hill Operator defined in \eqref{operatorP}. If $%\lambda=0$ is an eigenvalue of  $\mathcal{P}$ and $g$ is of class $C^1$, then the family of operators $\{\mathcal{P}\;  ; \; r \in \mathcal{V}\}$ is isoinertial with respect to the parameter $r \in \mathcal{V}$. In particular, if $\theta<0$ (respectively $\theta>0$) for some $r_0 \in \mathcal{V}$, then $\theta<0$ (respectively $\theta>0$) for all $r \in \mathcal{V}$. Moreover, the inertial index ${\rm in}(\mathcal{P})$ is constant in terms of the period $L$. 
%\end{proposition}
%\begin{proof}
%See \cite[Theorems 3.8 and 3.12]{NataliPastorCristofani}.
%\end{proof}

\indent Now we can give a relation between $\frac{\partial L}{\partial B}$ and $\theta$.

\begin{lemma} \label{teo1}
We have that $\frac{\partial L}{\partial B}=-\theta$, where $\theta$ is the constant in \eqref{theta}.
\end{lemma}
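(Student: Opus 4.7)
The plan is to exploit the smooth $B$-dependence of the periodic solution family and translate the derivative $\partial L/\partial B$ into information about the fundamental system $\{\varphi',\bar{y}\}$ of $\mathcal{L}$. Parametrize the family of periodic solutions as $\Phi=\Phi(x,B)$, where $\Phi(\cdot,B)$ denotes the solution of (\ref{ode}) with $\Phi(0,B)=b_2(B)$ (the turning point of maximum value) and $\Phi'(0,B)=0$; by translation invariance we may assume $\varphi=\Phi(\cdot,B)$, so that $\Phi(\cdot,B)$ is even and $L(B)$-periodic, and
\begin{equation*}
\Phi(L(B),B)=b_2(B),\qquad \Phi'(L(B),B)=0.
\end{equation*}

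Set $\psi:=\partial_B\Phi$. Differentiating (\ref{ode}) with respect to $B$ gives $\mathcal{L}\psi=0$, so $\psi=\alpha\varphi'+\beta\bar{y}$ for suitable constants. The initial conditions $\psi(0)=b_2'(B)$ and $\psi'(0)=0$, together with $\varphi'(0)=0$, $\bar{y}(0)=-1/\varphi''(0)$, and $\bar{y}'(0)=0$ from (\ref{y}), force $\alpha=0$ and $\beta=-\varphi''(0)\,b_2'(B)$, hence
\begin{equation*}
\psi=-\varphi''(0)\,b_2'(B)\,\bar{y}.
\end{equation*}
Moreover, differentiating the conservation law $\mathcal{E}(b_2(B),0)=B$ in (\ref{energyODE}) and using $\varphi''(0)=b_2(B)-b_2(B)^{k}$ from (\ref{ode}), one obtains $b_2'(B)=-1/\varphi''(0)$.

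Next, differentiate the two identities $\Phi(L(B),B)=b_2(B)$ and $\Phi'(L(B),B)=0$ with respect to $B$. Using $\varphi'(L)=\varphi'(0)=0$ and $\varphi''(L)=\varphi''(0)$, these yield $\psi(L)=b_2'(B)$ and $\psi'(L)=-\varphi''(0)\,L'(B)$, respectively. The Floquet relation (\ref{y1}) gives $\bar{y}(L)=-1/\varphi''(0)$ and $\bar{y}'(L)=\theta\,\varphi''(0)$, so that $\psi(L)=b_2'(B)$ is automatically satisfied by the identification above, while the second relation becomes
\begin{equation*}
-\theta(\varphi''(0))^{2}\,b_2'(B)=-\varphi''(0)\,L'(B),
\end{equation*}
yielding $L'(B)=\theta\,\varphi''(0)\,b_2'(B)=-\theta$, as desired.

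The delicate point is identifying $\psi$ correctly inside the two-dimensional kernel of $\mathcal{L}$ and, in particular, evaluating $\bar{y}$ at $x=L$ through the Floquet relation (\ref{y1}) rather than via pointwise periodicity, since $\bar{y}$ is generically non-periodic. Everything else is bookkeeping, once the parametrization of $\Phi$ at a turning point is fixed. The argument requires no separate treatment of the sign-changing case, because it relies only on the existence of a smooth family of turning points $b_2(B)$ and on $\varphi''(0)\neq 0$, both of which hold in each of the regimes covered by Theorem~\ref{mainT}.
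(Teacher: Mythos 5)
Your proof is correct and follows essentially the same route as the paper: differentiate the periodicity/turning-point conditions with respect to $B$, identify $\partial_B\Phi$ with a multiple of $\bar{y}$ by uniqueness for the linearized initial-value problem, and read off $\bar{y}(L)$, $\bar{y}'(L)$ from the Floquet relation \eqref{y1}. Your sign bookkeeping ($b_2'(B)=-1/\varphi''(0)$, hence $\partial_B\Phi=\bar{y}$) is the internally consistent one; the paper's intermediate claim $\frac{\partial\varphi(0)}{\partial B}=\frac{1}{\varphi''(0)}$ contains a sign slip that cancels against its identification $\bar{y}=-\frac{\partial\varphi}{\partial B}$, so both arguments land on the same conclusion.
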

\begin{proof}
Consider $\bar{y}$ and $\varphi'$ as above. Since $\varphi$ is even and periodic one has $\varphi'(0)=\varphi'(L)=0$. Thus, the smoothness of $\varphi$ in terms of the parameter $B$ enables us to take the derivative of $\varphi'(L)=0$ with respect to $B$ to obtain 
\begin{equation}\label{eq12343}
\varphi''(L)\frac{\partial L}{\partial B}+\frac{\partial \varphi'(L)}{\partial B}=0.\end{equation}
\indent Next, we turn back to equation $(\ref{ode})$ and multiply it by $\varphi'$ to deduce, after integration, the quadrature form
\begin{equation}\label{eq12344}
\frac{ \varphi'^2(x)}{2}-\frac{\varphi(x)^2}{2}+\frac{\varphi(x)^{k+1}}{k+1}-B=0,\ \ \ \ \mbox{for all}\ x\in [0,L].
\end{equation}
Deriving equation $(\ref{eq12344})$ with respect to $B$ and taking $x=0$ in the final result, we obtain from $(\ref{ode})$ that 
$\frac{\partial \varphi(0)}{\partial B}=\frac{1}{ \varphi''(0)}$. In addition, since $\varphi$ is even one has that $\frac{\partial \varphi}{\partial B}$ is also even and thus $\frac{\partial \varphi'(0)}{\partial B}=0$. On the other hand, deriving equation $(\ref{ode})$ with respect to $B$, we obtain that $\frac{\partial\varphi}{\partial B}$ satisfies the initial-value problem
\begin{equation}\label{varphiB}
\left\{\begin{array}{lllll}\displaystyle-\frac{\partial\varphi}{\partial B}''+\frac{\partial \varphi}{\partial B}-k\varphi^{k-1}\frac{\partial \varphi}{\partial B}=0,\\\\
\displaystyle\frac{\partial \varphi(0)}{\partial B}=\frac{1}{ \varphi''(0)},\\\\
\displaystyle\frac{\partial \varphi'(0)}{\partial B}=0.
\end{array}\right.	
\end{equation}
The existence and uniqueness theorem for ordinary differential equations applied to the problem $(\ref{y})$ enables us to deduce that $\bar{y}=-\frac{\partial \varphi}{\partial B}$. Therefore, we can combine $(\ref{theta})$ with $(\ref{eq12343})$ to obtain that $\frac{\partial L}{\partial B}=-\theta$. 
\end{proof}

\indent Next lemma is basic for our purposes. 

\begin{lemma}\label{1range}
	Let $\mathcal{L}$ be the linearized operator defined in $(\ref{operator})$. Thus, $1\in \rm{range}(\mathcal{L})$. 	
\end{lemma}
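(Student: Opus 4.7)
The plan is to invoke the Fredholm alternative for self-adjoint operators. The operator $\mathcal{L}$ is a Hill operator with smooth $L$-periodic potential $Q(x)=1-k\varphi(x)^{k-1}$, so it is self-adjoint on $L^2_{per}([0,L])$ with compact resolvent. Consequently its range is closed and
\[
\text{range}(\mathcal{L})=\ker(\mathcal{L})^{\perp}.
\]
To conclude $1\in\text{range}(\mathcal{L})$ it therefore suffices to show that the constant function $1$ is $L^2_{per}$-orthogonal to every element of $\ker(\mathcal{L})$.

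The main input is $\varphi'\in\ker(\mathcal{L})$, obtained by differentiating \eqref{ode} in $x$, together with the elementary computation
\[
\langle 1,\varphi'\rangle_{L^2_{per}}=\int_0^L\varphi'(x)\,dx=\varphi(L)-\varphi(0)=0,
\]
which is immediate from the periodicity of $\varphi$. In the generic situation where $\ker(\mathcal{L})$ is one-dimensional---which, by the Floquet criterion discussed around \eqref{theta1}--\eqref{y1}, is equivalent to $\theta\neq 0$---this single orthogonality relation is exactly the Fredholm condition, and the lemma follows at once.

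The main obstacle is the degenerate case $\theta=0$, in which $\bar y$ from \eqref{y} is itself $L$-periodic, $\dim\ker(\mathcal{L})=2$, and one must additionally verify $\int_0^L\bar y\,dx=0$. To handle this uniformly, my backup plan is to construct a periodic preimage of $1$ by variation of parameters on the fundamental system $\{\varphi',\bar y\}$ using the Wronskian identity $W(\varphi',\bar y)=1$ recorded just after \eqref{y}. This produces a (generally non-periodic) particular solution $\Psi_{p}$ of $\mathcal{L}\Psi_{p}=1$ on $\mathbb{R}$, and a short calculation based on the Floquet relation \eqref{y1} shows that the defect $\Psi_{p}(x+L)-\Psi_{p}(x)$ is a scalar multiple of $\varphi'(x)$ alone, with no $\bar y$-component. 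Whenever $\theta\neq 0$ this $\varphi'$-defect is absorbed by adding a suitable multiple of $\bar y$ (whose own quasi-period is $\theta\varphi'$), yielding an $H^{2}_{per}$ preimage of $1$; the trickiest remaining point is confirming, in the $\theta=0$ sub-case, that the residual scalar defect vanishes automatically---a fact that can be extracted from the identification $\bar y=-\partial_{B}\varphi$ established in the proof of Lemma \ref{teo1} combined with $L'(B)=-\theta=0$.
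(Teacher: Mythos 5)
Your first plan (self-adjointness plus the Fredholm alternative, reducing the lemma to $1\perp\ker(\mathcal{L})$) is a clean framing, and the computation $\int_0^L\varphi'\,dx=0$ does settle the case $\dim\ker(\mathcal{L})=1$. You also correctly identify that the whole difficulty is the degenerate case $\theta=0$, and your variation-of-parameters computation is right: the quasi-periodic defect of the particular solution built from the fundamental system $\{\varphi',\bar y\}$ is exactly $\bigl(\int_0^L\bar y\,dx\bigr)\varphi'$, so for $\theta\neq0$ it can be absorbed by adding a suitable multiple of $\bar y$. Up to this point your backup plan and the paper's proof are close in spirit: the paper also builds a particular solution of $\mathcal{L}u=1$ by variation of parameters and checks its periodicity by hand; the difference is that the paper works with the quarter-period translate $\psi=\varphi(\cdot-L/4)$ and extracts the needed zero-mean property of the second solution from a parity (oddness) argument rather than from Floquet bookkeeping.

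The genuine gap is your treatment of the case $\theta=0$, which is precisely the case that cannot be waved away: Lemma \ref{1range} feeds into Lemma \ref{phirange} and hence into the proof of Lemma \ref{teospec}, which is where $\theta\neq0$ (equivalently, simplicity of the kernel) is finally established, so invoking $\theta\neq0$ or $\dim\ker(\mathcal{L})=1$ here would be circular. In that degenerate case you must show $\int_0^L\bar y\,dx=0$, and your proposed derivation from $\bar y=-\partial_B\varphi$ together with $L'(B)=-\theta=0$ does not go through as stated: differentiating $\int_0^{L(B)}\varphi_B(x)\,dx$ in $B$ gives $L'(B)\varphi_B(0)+\int_0^{L}\partial_B\varphi_B\,dx$, so even when $L'(B)=0$ you only learn that $\int_0^L\bar y\,dx$ equals $-\frac{d}{dB}\int_0^{L(B)}\varphi_B\,dx$, a quantity with no evident reason to vanish. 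Some independent input is needed to kill this mean --- the paper's substitute is the claimed oddness, hence zero mean, of the translated second solution $\chi=\bar y(\cdot-L/4)$ --- and without such an input your proof is incomplete exactly where it matters.
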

\begin{proof}
	First, we see that (\ref{ode}) is invariant under translations. This means that if $\varphi$ is a solution of $(\ref{ode})$, thus $\psi_r=\varphi(\cdot-r)$ is also a solution for all $r\in\mathbb{R}$. In particular, if $r=L/4$ we obtain that $\psi:=\psi_{L/4}=\varphi(\cdot-L/4)$ is also a periodic solution for the equation $(\ref{ode})$. Therefore, $\varphi'$ is an odd element of $\ker(\mathcal{L})$, so that $\psi'$ results to be even and it is an element of the kernel of $\widetilde{\mathcal{L}}=-\partial_x^2+1-k\psi^{k-1}$ ($\widetilde{\mathcal{L}}$ is the operator obtained from $\mathcal{L}$ by considering the linearization at $\psi$ instead of $\varphi$). Consider then, $\chi=\bar{y}(\cdot-L/4)$ the corresponding element of the formal equation $\mathcal{L}(\bar{y})=0$ associated to the translation solution $\psi$. Since $\bar{y}$ is even, $\chi=\bar{y}(\cdot-L/4)$ and $\bar{y}(\cdot+L/4)$ result to be odd. Considering the change of variables $x=s+L/2$, we obtain that
	$$\int_0^L\chi(x)dx
	 \int_0^L\bar{y}(x-L/4)dx=\int_{-L/2}^{L/2}\bar{y}(s+L/4)ds=0,$$ 
	that is, $\chi$ has the zero mean property.\\
	\indent Let us define the following function:
	\begin{equation}\label{varform}
		h(x)=\left(\int_0^x \chi(s)ds\right) \psi'(x)-\left(\int_0^x\psi'(s)ds\right)\chi(x).
		\end{equation}	
Using the method of variation of parameters, we see that $h$ is a particular solution (not necessarily periodic) of the equation 
\be\label{gensol}-u''+u-k\psi^{k-1}u=1,\ee 
in the sense that $Y=c_1\psi'+c_2\chi +h$ is a general solution associated with the equation $(\ref{gensol})$, where $c_1$ and $c_2$ are real constants. We need to prove that $h$ is periodic. In fact, it is clear $h(0)=0$ and since $\int_0^L\chi(x)dx=0$, one has from $(\ref{varform})$ and the fact that $\psi'$ is periodic that $h(L)=0$. Deriving $(\ref{varform})$ with respect to $x$, we obtain after some computations $h'(x)=\left(\int_0^x \chi(s)ds\right) \psi''(x)-\left(\int_0^x\psi'(s)ds\right)\chi'(x)$, so that $h'(0)=0$. Since $\psi$ is both odd and periodic, it follows that $\psi'$ and $\psi''$ are also periodic, with $\psi''$ being an odd function. We have
$$\begin{array}{lll}h'(L)&=&\displaystyle\left(\int_0^L \chi(s)ds\right) \psi''(L)-\left(\int_0^L\psi'(s)ds\right)\chi'(L)\\\\
&=&\displaystyle\left(\int_0^L \chi(s)ds\right) \psi''(0)-(\psi(L)-\psi(0))\chi'(L)=0\end{array}$$ so that, $h$ is periodic. Equality $(\ref{gensol})$ is then satisfied for the periodic function $h$, that is, $$-h''(x)+h(x)-k\psi(x)^{k-1}h(x)=1,$$ for all $x\in\mathbb{R}$. In particular, for $x=t+L/4$ and since $\varphi(t)=\psi(t+L/4)$, we have $$-h''(t+L/4)+h(t+L/4)-k\varphi(t)^{k-1}h(x+L/4)=1.$$ 
\indent Defining $\tilde{h}(t)=h(t+L/4)$, we obtain that $\tilde{h}$ is $L-$periodic and it satisfies the equation $-\tilde{h}''(t)+\tilde{h}(t)-k\varphi(t)^{k-1}\tilde{h}(t)=1$, so that $\mathcal{L}(\tilde{h})=1$ as requested in lemma.
\end{proof}

\begin{lemma}\label{phirange}
	We have that $\{\varphi^{k-1},\varphi^k\}\subset {\rm range}(\mathcal{L})$.
	\end{lemma}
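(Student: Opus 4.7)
The plan is to exhibit explicit periodic preimages for $\varphi^{k-1}$ and $\varphi^k$ under $\mathcal{L}$, using only the ODE $(\ref{ode})$ itself together with the conclusion of Lemma $\ref{1range}$, namely that $1\in{\rm range}(\mathcal{L})$ with some periodic preimage $\tilde h$.

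For $\varphi^k$, I would apply $\mathcal{L}$ directly to $\varphi$:
\[
\mathcal{L}(\varphi)=-\varphi''+\varphi-k\varphi^{k-1}\cdot\varphi=(-\varphi''+\varphi)-k\varphi^k.
\]
Substituting the ODE $-\varphi''+\varphi=\varphi^k$ collapses this to $\mathcal{L}(\varphi)=(1-k)\varphi^k$. Since $k>1$, dividing by $1-k$ yields
\[
\varphi^k=\mathcal{L}\!\left(\frac{\varphi}{1-k}\right),
\]
and the preimage $\varphi/(1-k)$ is automatically $L$-periodic (and smooth), so $\varphi^k\in{\rm range}(\mathcal{L})$.

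For $\varphi^{k-1}$, I would evaluate $\mathcal{L}$ at the constant function $1$:
\[
\mathcal{L}(1)=1-k\varphi^{k-1},
\]
which rearranges to $k\varphi^{k-1}=1-\mathcal{L}(1)$. Invoking Lemma $\ref{1range}$, the constant function $1$ can be written as $\mathcal{L}(\tilde h)$ for a periodic $\tilde h$, so that
\[
\varphi^{k-1}=\frac{1}{k}\bigl(\mathcal{L}(\tilde h)-\mathcal{L}(1)\bigr)=\mathcal{L}\!\left(\frac{\tilde h-1}{k}\right).
\]
The function $(\tilde h-1)/k$ is periodic (difference of two periodic functions), so $\varphi^{k-1}\in{\rm range}(\mathcal{L})$ as well.

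The only substantive ingredient is the existence of the periodic $\tilde h$ with $\mathcal{L}(\tilde h)=1$, which has already been carried out in Lemma $\ref{1range}$ via the variation-of-parameters construction exploiting the zero-mean property of the translated even solution. Once that lemma is granted, the present statement reduces to the two one-line algebraic identities above, so I do not anticipate any genuine obstacle beyond recognizing that $\varphi$ and the constant $1$ are the natural ``test functions'' to plug into $\mathcal{L}$.
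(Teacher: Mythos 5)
Your proposal is correct and follows essentially the same route as the paper: apply $\mathcal{L}$ to $\varphi$ and use the ODE to get $\mathcal{L}(\varphi)=(1-k)\varphi^k$, then combine $\mathcal{L}(1)=1-k\varphi^{k-1}$ with Lemma \ref{1range} to handle $\varphi^{k-1}$. The only difference is that you write out the explicit periodic preimages, which the paper leaves implicit.
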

\begin{proof}
	By $(\ref{ode})$, we obtain 
	$$\mathcal{L}(\varphi)=-\varphi''+\varphi-k\varphi^k=-\varphi''+\varphi-k\varphi^k+\varphi^k-\varphi^k=(1-k)\varphi^k.$$
	Thus, $\varphi^k\in {\rm range}(\mathcal{L})$.\\
	\indent To prove that $\varphi^{k-1}\in {\rm range}(\mathcal{L})$, we need to use the fact that $\mathcal{L}(1)=1-k\varphi^{k-1}$. By Lemma $\ref{1range}$ one has $1\in{\rm range}(\mathcal{L})$, so that $\varphi^{k-1}\in {\rm range}(\mathcal{L})$.
\end{proof}

\indent Next lemma establishes item i) of Theorem $\ref{mainT}$. In what follows $n(\mathcal{A})$ and $z(\mathcal{A})$ are respectively the number of negative eigenvalues (counting multiplicities) and the dimension of the kernel of a certain linear operator $\mathcal{A}$.

\begin{lemma}\label{condzero}
	Let $k>1$ be a positive odd integer. If $\varphi$ is the zero mean periodic solution of $(\ref{ode})$, thus $n(\mathcal{L})=2$ and $z(\mathcal{L})=1$. In particular, we have that $\frac{\partial L}{\partial B}<0$. 
\end{lemma}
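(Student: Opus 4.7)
The strategy is to pin down the spectrum of $\mathcal{L}$ directly, via the min--max principle and the oscillation theorem recalled at the start of this section, and then read off the sign of $\theta$ from Lemmas \ref{specprop} and \ref{teo1}. Concretely I will establish $n(\mathcal{L}) = 2$ and $z(\mathcal{L}) = 1$. Since $\theta = 0$ would force $z(\mathcal{L}) \geq 2$ (via the Floquet relation $(\ref{y1})$, as $\bar{y}$ would then belong to $\ker(\mathcal{L})$ in addition to $\varphi'$) and $\theta < 0$ would give $n(\mathcal{L}) = 1$ by Lemma \ref{specprop}, the remaining case $\theta > 0$ is forced, and Lemma \ref{teo1} then yields $\frac{\partial L}{\partial B} = -\theta < 0$.

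To show $n(\mathcal{L}) \geq 2$ I would exhibit a two-dimensional subspace $V \subset H^{2}_{per}([0,L])$ on which the quadratic form $\langle \mathcal{L}\cdot,\cdot\rangle$ is negative definite; Courant--Fischer applied to $\lambda_{1}(\mathcal{L})$ then yields $\lambda_{1}(\mathcal{L}) < 0$. A convenient candidate is $V = \mathrm{span}\{\varphi,\varphi^{2}\}$. Directly from $(\ref{ode})$ one has $\mathcal{L}\varphi = (1-k)\varphi^{k}$, and combining $(\varphi^{2})'' = 2(\varphi')^{2} + 2\varphi\varphi''$ with the ODE yields $\mathcal{L}(\varphi^{2}) = -2(\varphi')^{2} - \varphi^{2} + (2-k)\varphi^{k+1}$. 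The diagonal Gram entries $\langle \mathcal{L}\varphi,\varphi\rangle = (1-k)\int_{0}^{L}\varphi^{k+1}\,dx$ and $\langle \mathcal{L}(\varphi^{2}),\varphi^{2}\rangle = -2\int_{0}^{L}(\varphi')^{2}\varphi^{2}\,dx - \int_{0}^{L}\varphi^{4}\,dx + (2-k)\int_{0}^{L}\varphi^{k+3}\,dx$ are both strictly negative because $k$ is an odd integer with $k \geq 3$: the exponents $k+1$ and $k+3$ are even, so $\varphi^{k+1},\varphi^{k+3} \geq 0$ with positive integral, while $1-k < 0$ and $2-k \leq -1 < 0$.

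For the off-diagonal entry $\langle \mathcal{L}\varphi,\varphi^{2}\rangle = (1-k)\int_{0}^{L}\varphi^{k+2}\,dx$ I would invoke the $\varphi \mapsto -\varphi$ symmetry of the ODE, available precisely because $k$ is odd. Since the phase-plane orbit at energy $B > 0$ is invariant under $(\varphi,\xi) \mapsto (-\varphi,-\xi)$, a standard symmetry argument gives $\varphi(x + L/2) = -\varphi(x)$ for every $x$, and as $k+2$ is odd, $\varphi^{k+2}$ inherits this $L/2$-antiperiodicity, so $\int_{0}^{L}\varphi^{k+2}\,dx = 0$. The Gram matrix on $V$ is therefore diagonal with negative entries, $\mathcal{L}|_{V}$ is negative definite, and $\lambda_{1}(\mathcal{L}) < 0$.

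To close, I apply the oscillation theorem. The eigenfunction $\varphi' \in \ker(\mathcal{L})$ has exactly two zeros in $[0,L)$, namely the turning points $x = 0$ and $x = L/2$, so it must be associated with $\lambda_{1}$ or $\lambda_{2}$; having $\lambda_{1} < 0$ forces $\lambda_{2}(\mathcal{L}) = 0$. The strict inequality $\lambda_{2} < \lambda_{3}$ (eigenfunctions of $\lambda_{3},\lambda_{4}$ have four zeros rather than two) then makes $0$ a simple eigenvalue, so $z(\mathcal{L}) = 1$ and $n(\mathcal{L}) = 2$, completing the spectral picture. The step I expect to be the main obstacle is the choice of $V$: the whole argument relies on a confluence of features that only comes together for $k$ odd, $k \geq 3$, namely (a) the closed form $\mathcal{L}\varphi = (1-k)\varphi^{k}$, (b) the $L/2$-antiperiodicity of $\varphi$ that annihilates the cross term, (c) the inequality $k \geq 3$ that makes $2-k \leq -1$ negative, and (d) the parity of $k+1$ and $k+3$; weakening any one of these breaks the sign pattern that makes $\{\varphi,\varphi^{2}\}$ work.
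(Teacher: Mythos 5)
Your argument is correct, but it reaches $n(\mathcal{L})=2$ and $z(\mathcal{L})=1$ by a genuinely different route than the paper. The paper translates the solution by $L/4$ so that $\psi=\varphi(\cdot-L/4)$ is odd, decomposes the translated operator $\widetilde{\mathcal{L}}$ into its odd and even sectors, gets one negative direction in the odd sector from $(\widetilde{\mathcal{L}}_{odd}\psi,\psi)_{L^2_{per}}=(1-k)\int_0^L\psi^{k+1}\,dx<0$, and then invokes Krein--Rutman to argue that the ground state is even and positive, so this odd negative direction sits strictly above it, forcing $n(\mathcal{L})\geq 2$; the simplicity of the kernel is likewise extracted from the parity decomposition. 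You instead exhibit the explicit two-dimensional subspace $V=\mathrm{span}\{\varphi,\varphi^2\}$, compute $\mathcal{L}\varphi=(1-k)\varphi^k$ and $\mathcal{L}(\varphi^2)=-2(\varphi')^2-\varphi^2+(2-k)\varphi^{k+1}$ (both identities check out), and kill the cross term via the antiperiodicity $\varphi(x+L/2)=-\varphi(x)$; the simplicity of the zero eigenvalue then comes from the strict gap $\lambda_2<\lambda_3$ in the oscillation theorem rather than from a parity argument. Both proofs share the same skeleton (oscillation theorem to cap the index of $0$ at two, then Lemmas \ref{specprop} and \ref{teo1} to convert $n(\mathcal{L})=2$, $z(\mathcal{L})=1$ into $\theta>0$ and $\frac{\partial L}{\partial B}<0$), and in fact you state that final reduction more explicitly than the paper does. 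Your approach buys concreteness and avoids Krein--Rutman entirely, at the price of the symmetry input $\varphi(x+L/2)=-\varphi(x)$; the paper's approach is softer and avoids the Gram-matrix computation, but its oddness claim for $\varphi(\cdot-L/4)$ secretly rests on the very same antiperiodicity. Two small points you should make explicit: $\varphi$ and $\varphi^2$ are linearly independent (clear, since $\varphi$ changes sign while $\varphi^2\geq 0$), and the antiperiodicity deserves its one-line proof (the level set $\mathcal{E}=B>0$ is a single closed orbit invariant under $(\varphi,\xi)\mapsto(-\varphi,-\xi)$ because $k+1$ is even, so $-\varphi(\cdot)=\varphi(\cdot+c)$ for some shift $c$ with $2c\in L\mathbb{Z}$ and $c\neq 0$, whence $c=L/2$). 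As a curiosity, the vanishing of $\int_0^L\varphi^{k+2}\,dx$ also follows directly from the self-adjointness of $\mathcal{L}$ applied to the pair $(\varphi,\varphi^2)$, so the cross-term cancellation is not as fragile as your closing remark suggests.
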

\begin{proof}
	First, we see that $\varphi'$ is an odd eigenfunction of $\mathcal{L}$ associated to the eigenvalue $0$ having two zeroes in the interval $[0,L)$. From the Oscillation theorem (see \cite{magnus}), we obtain that $0$ needs to be the second or the third eigenvalue in the sequence of real numbers in $(\ref{seqeigen})$.\\
	\indent On the other hand, since $\varphi$ is even and $\int_0^L\varphi(x) dx=0$, one has $\psi=\varphi(\cdot-L/4)$ is odd and it has the zero mean property. Consider again $\widetilde{\mathcal{L}}=-\partial_x^2+1-k\psi^{k-1}$ the translated operator obtained from $\mathcal{L}$ by considering the linearization at $\psi$ instead of $\varphi$ and let $\widetilde{\mathcal{L}}_{odd}$ be the restriction of $\widetilde{\mathcal{L}}$ in the odd sector of $L_{per}^2([0,L])$. Notice that such restriction is possible because $\psi^{k-1}$ is even since $k-1$ is an even number. Since $k+1>2$ is also even, we have $(\widetilde{\mathcal{L}}_{odd}(\psi),\psi)_{L_{per}^2}=(1-k)\int_{0}^L\psi(x)^{k+1}dx<0$, so that by Courant's min-max characterization of eigenvalues, we obtain $n(\widetilde{\mathcal{L}}_{odd})\geq1$. The fact  $n(\widetilde{\mathcal{L}})=n(\widetilde{\mathcal{L}}_{odd})+n(\widetilde{\mathcal{L}}_{even})$ and  Krein-Rutman's Theorem enable us to conclude that the first eigenvalue of $\widetilde{\mathcal{L}}$ is simple and it is associated to a positive (negative) eigenfunction which needs to be even. Thus, we obtain since $0$ is the second or third eigenvalue of $\widetilde{\mathcal{L}}$ that $n(\widetilde{\mathcal{L}})=n(\mathcal{L})=2$ as requested.\\
	\indent We prove that $z(\mathcal{L})=1$. Indeed, since $n(\widetilde{\mathcal{L}}_{odd})=1$, we see that the corresponding eigenfunction $p$ associated to the first eigenvalue of $\widetilde{\mathcal{L}}_{odd}$ is odd and consequently, $q=p(\cdot-L/4)$ is an even function that changes its sign. Again, by Krein-Rutman's theorem we have that the first eigenfunction $\lambda_1$ of $\mathcal{L}$ is simple and it is associated to a positive (negative) even periodic function, so that $0$ can not be an eigenvalue associated to $\widetilde{\mathcal{L}}_{odd}$. Since $z(\widetilde{\mathcal{L}})=z(\widetilde{\mathcal{L}}_{odd})+z(\widetilde{\mathcal{L}}_{even})$, we obtain from the fact $\psi'$ is even that $z(\widetilde{\mathcal{L}})=z(\widetilde{\mathcal{L}}_{even})=1$. Therefore, using the translation transformation $f=g(\cdot-L/4)$, we obtain $z(\mathcal{L})=z(\mathcal{L}_{odd})=1$ as requested.
\end{proof}

\indent We prove the second part of Theorem $\ref{mainT}$. To do so, it is necessary to prove the following basic result.

\begin{lemma}\label{lemamin}
Let $L>0$ and $k>1$ be fixed. There exists $\varphi\in H_{per}^1([0,L])$ solution of the following constrained minimization problem
	\begin{equation}\label{minP1}
	\nu=\inf\left\{D(u);\ u\in H_{per}^1([0,L]),\ \int_0^{L} u(x)^{k+1}dx=1\right\},
\end{equation}
where $D: H_{per}^1([0,L])\rightarrow \mathbb{R}$	is the functional defined as
\begin{equation}\label{funcB1}
	D(u)=\frac{1}{2}\int_0^{L}u'(x)^2+u(x)^2dx.
\end{equation}
	In addition, $\varphi$ is smooth and it satisfies equation $(\ref{ode})$. Conversely, if $\varphi$ is a positive non-constant solution of $(\ref{ode})$, then $\varphi$ satisfies the minimization problem $(\ref{minP1})$.
	\end{lemma}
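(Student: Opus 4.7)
The plan is to apply the direct method of the calculus of variations, powered by the compact embedding $H^1_{per}([0,L]) \hookrightarrow L^{k+1}_{per}([0,L])$ available in one space dimension for every $k>1$. Continuity of this embedding forces $\nu>0$, since $1 = \int_0^L u^{k+1}\,dx \leq C\|u\|_{H^1_{per}}^{k+1}$ implies $D(u) \geq \tfrac{1}{2} C^{-2/(k+1)}$. Taking a minimizing sequence $\{u_n\}$, which we may assume nonnegative by replacing $u_n$ with $|u_n|$ (since $D(|u|)=D(u)$ and the constraint is invariant when $u^{k+1}$ is read as $|u|^{k+1}$ for non-integer $k$), the bound $\|u_n\|_{H^1_{per}}^2 = 2D(u_n) \to 2\nu$ yields $H^1$-boundedness. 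A subsequence then converges weakly in $H^1_{per}$ to some $\varphi$ and, by Rellich--Kondrachov, strongly in $L^{k+1}_{per}$, preserving the constraint $\int_0^L \varphi^{k+1}\,dx = 1$. The weak lower semicontinuity of $D$ forces $D(\varphi) \leq \nu$, so that $D(\varphi) = \nu$.

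The Lagrange multiplier theorem next provides $\lambda \in \mathbb{R}$ such that $-\varphi''+\varphi = \lambda \varphi^k$ holds in the weak sense; pairing with $\varphi$ and invoking the constraint yields $\lambda = 2\nu > 0$. Standard elliptic bootstrap on this equation produces $\varphi \in C^\infty$, and the strong maximum principle upgrades $\varphi \geq 0$ to $\varphi > 0$. Absorbing the multiplier by the rescaling $\varphi \mapsto \lambda^{1/(k-1)}\varphi$ then produces a positive smooth $L$-periodic solution of (\ref{ode}), settling the first half of the claim. For the converse, let $\varphi$ be any positive non-constant $L$-periodic solution of (\ref{ode}); multiplying (\ref{ode}) by $\varphi$ and integrating gives $2D(\varphi) = \int_0^L \varphi^{k+1}\,dx$, so the normalization $\tilde\varphi = \beta\varphi$ with $\beta = \bigl(\int_0^L\varphi^{k+1}\,dx\bigr)^{-1/(k+1)}$ is an admissible competitor for (\ref{minP1}), and identifying $\tilde\varphi$ with a minimizer will realize $\varphi$ as a rescaling of the variational solution.

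The principal obstacle lies precisely in this converse identification: the Euler--Lagrange equation admits the constant equilibrium $\varphi \equiv 1$ and, a priori, further critical points, so the Lagrange condition alone does not pick out the ground state. The most economical fix is to invoke uniqueness (up to translation) of positive non-constant $L$-periodic orbits of (\ref{ode}), which is apparent from the phase-plane discussion of Section~2: such orbits form a single smooth one-parameter family indexed by the energy $B \in \bigl(\tfrac{1-k}{2(k+1)},0\bigr)$, with each period $L(B)$ attained by exactly one orbit. Once this uniqueness is accepted, the positive minimizer produced above can only coincide (after rescaling) with $\varphi$, which closes the argument.
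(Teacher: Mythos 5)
Your existence argument follows the paper's proof essentially step for step (direct method, compact embedding $H^1_{per}\hookrightarrow L^{k+1}_{per}$, weak lower semicontinuity, Lagrange multiplier, rescaling, bootstrap), and the details you add --- passing to $|u_n|$, the identification $\lambda=2\nu>0$, the maximum principle for strict positivity --- are correct and in places more careful than the paper's own treatment. The problem is the converse, and the fix you propose does not work.

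Concretely, you reduce the converse to the claim that ``each period $L(B)$ is attained by exactly one orbit,'' and you attribute this to the phase-plane discussion of Section 2. That discussion only produces a one-parameter family $\{\varphi_B\}$; it does not exclude two energies $B_1\neq B_2$ in $\left(\frac{1-k}{2(k+1)},0\right)$ with $L(B_1)=L(B_2)$. Injectivity of $B\mapsto L(B)$ is precisely the monotonicity statement the paper is in the course of proving (Lemma \ref{lemamin} feeds Lemma \ref{teospec}, which feeds Theorem \ref{mainT}), so invoking it here is circular. Moreover, even granting uniqueness of positive non-constant orbits, your identification argument would still have to rule out that the minimizer is the constant state $u\equiv L^{-1/(k+1)}$, which also satisfies the Euler--Lagrange equation; uniqueness of \emph{non-constant} orbits does not exclude this. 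The paper's converse avoids identification altogether and is a value comparison: having computed $\nu=\frac{1}{2}$, it normalizes the given positive non-constant solution so that $\int_0^L\varphi(x)^{k+1}dx=1$ and uses the identity $2D(\varphi)=\int_0^L\varphi(x)^{k+1}dx$, obtained by multiplying \eqref{ode} by $\varphi$ and integrating, to conclude $D(\varphi)=\frac{1}{2}=\nu$; then $\varphi$ attains the infimum regardless of what other minimizers exist. If you repair your write-up, this energy-identity route is the one to take --- and you should verify, rather than assert, that the normalization $\int_0^L\varphi^{k+1}dx=1$ is compatible with $\varphi$ solving \eqref{ode}, since that is the one point the paper itself passes over with a ``without loss of generality.''
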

\begin{proof}
	Since $D$ is smooth and $D(u)\geq 0$ for all $u\in H_{per}^1([0,L])$, there exists $(u_n)_{n\in\mathbb{N}}$ a minimizing sequence associated to the problem $(\ref{minP1})$, that is, there exists $(u_n)_{n\in\mathbb{N}}\subset H_{per}^1([0,L])$ satisfying $\int_0^Lu_n(x)^{k+1}dx=1$ for all $n\in\mathbb{N}$, and 
	\begin{equation}D(u_n)\rightarrow \nu\label{conv1},
	\end{equation} 
as $n\rightarrow +\infty$. The convergence in $(\ref{conv1})$ allows us to conclude that $(u_n)_{n\in\mathbb{N}}$ is a bounded sequence in $H_{per}^1([0,L])$. Since $H_{per}^1([0,L])$ is a Hilbert space, there exists $\varphi\in H_{per}^1([0,L])$ such that 
	\begin{equation}u_n\rightharpoonup \varphi\label{conv2}.
\end{equation} 
On the other hand, the compact embedding $H_{per}^1([0,L])\hookrightarrow L_{per}^{k+1}([0,L])$ gives us	
\begin{equation}u_n\rightarrow \varphi\ \mbox{in}\ L_{per}^{k+1}([0,L]).\label{conv3}
\end{equation} 
\indent Consider the function $q:\mathbb{R}\rightarrow\mathbb{R}$ given by $q(s)=s^{k+1}$. The mean value theorem gives us $s^{k+1}-t^{k+1}=(k+1)w^{k}(s-t)$, where $w$ is point in the open interval $(t,s)$. Since $|w|\leq|s|+|t|$, we obtain \begin{equation}\label{est1}|u_n(x)^{k+1}-\varphi(x)^{k+1}|\leq 2^{k+1}(k+1)(|u_n(x)|^{k}+|\varphi(x)|^k)|u_n(x)-\varphi(x)|,
\end{equation} for all $x\in[0,L]$. The H\"older inequality and $(\ref{est1})$ allow us to conclude
\begin{equation}\label{conv4}\begin{array}{llll}
\displaystyle\left|\int_0^Lu_n(x)^{k+1}-\varphi(x)^{k+1}dx\right|&\leq&\displaystyle\int_0^L|u_n(x)^{k+1}-\varphi(x)^{k+1}|dx\\\\
&\leq& C_0\displaystyle||u_n-\varphi||_{L_{per}^{k+1}}(||u_n||_{L_{per}^{k+1}}^k+||\varphi||_{L_{per}^{k+1}}^k),
\end{array}\end{equation}
where $C_0=2^{k+1}(k+1)$. By the convergence in $(\ref{conv3})$, we obtain from $(\ref{conv4})$ that $\int_0^L\varphi(x)^{k+1}dx=1$ and thus, $\nu\leq D(\varphi)$. On the other hand, the weak lower semi-continuity of $D$ and the convergence in $(\ref{conv2})$ give us that $D(\varphi)\leq \lim\inf_{n\rightarrow +\infty}D(u_n)=\nu$. Thus $D(\varphi)=\nu$ and the infimum is attained at $\varphi$.\\
\indent An application of the Lagrange multiplier theorem guarantees the existence of $C_1$ such that
	\begin{equation}\label{lagrange}
		-\varphi''+\varphi=C_1\varphi^{k+1}.
	\end{equation}
	 The function $\varphi$ is nontrivial because $\int_0^L\varphi(x)^{k+1}dx=1$. Using a standard rescaling argument, we can deduce that the Lagrange multiplier $C_1$ can be chosen as $1$, and thus we deduce that $\varphi$ solves equation $(\ref{ode})$. In addition, a bootstrap argument applied to the equality in $(\ref{ode})$ also gives that $\varphi$ is smooth. Next, multiplying equation $(\ref{ode})$ by $\varphi$ and integrating the result over $[0,L]$, we conclude that $\nu=\frac{1}{2}$.\\
	\indent Conversely, suppose that $\varphi$ is a positive non-constant periodic solution of $(\ref{ode})$. According to the arguments presented in Section 2, we can deduce that the period of $\varphi$ satisfies $L>2\pi$. Additionally, since $\varphi>0$ and non-constant, we can assume, without loss of generality, that $\int_0^L \varphi(x)^{k+1}dx=1$. Multiplying the equation $(\ref{ode})$ by $\varphi$ and integrating the result over $[0,L]$, we obtain $D(\varphi)=\frac{1}{2}$. Since every solution $\phi$ of the minimization problem $(\ref{minP1})$ satisfies $D(\phi)=\nu=\frac{1}{2}$, we obtain that the positive non-constant periodic solution $\varphi$ also satisfies the minimization $(\ref{minP1})$ as requested in lemma.
\end{proof}
\indent The proof of the second part of Theorem $\ref{mainT}$ is now presented.
\begin{lemma}\label{teospec}
	Let $\varphi$ be a positive non-constant periodic solution associated with the equation $(\ref{ode})$. We have that $n(\mathcal{L})=z(\mathcal{L})=1$ and in particular $\frac{\partial L}{\partial B}>0$.
\end{lemma}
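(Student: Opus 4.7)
My plan is to derive the spectral count for $\mathcal{L}$ from the variational characterization of $\varphi$ supplied by Lemma \ref{lemamin}. Since $\varphi$ minimizes
\[
D(u) = \tfrac{1}{2}\int_0^L \bigl(u'(x)^2 + u(x)^2\bigr)\,dx
\]
subject to $G(u) := \int_0^L u^{k+1}\,dx = 1$, I would first verify that the Lagrange multiplier equals $\mu = \frac{1}{k+1}$: integrating by parts and invoking (\ref{ode}) gives $D'(\varphi)v = \int_0^L \varphi^k v\,dx$, while $G'(\varphi)v = (k+1)\int_0^L \varphi^k v\,dx$. The standard second-order necessary condition, applied along a smooth path $t\mapsto\gamma(t)$ on the constraint manifold with $\gamma(0)=\varphi$ and $\dot\gamma(0)=v$, then reads $D''(\varphi)[v,v] + D'(\varphi)\ddot\gamma(0) \geq 0$; using the identity $D'(\varphi)\ddot\gamma(0)=\mu G'(\varphi)\ddot\gamma(0) = -\mu G''(\varphi)[v,v]$ to eliminate the acceleration term and carrying out the elementary computation $D''(\varphi)[v,v]-\mu G''(\varphi)[v,v] = \int_0^L(v'^2+v^2-k\varphi^{k-1}v^2)\,dx$, one obtains
\[
(\mathcal{L}v,v)_{L_{per}^2} \;\geq\; 0 \qquad \text{for every } v \in V := \Bigl\{v \in H^1_{per}([0,L]) : \int_0^L \varphi^k v\,dx = 0\Bigr\},
\]
a closed subspace of codimension one in $H^1_{per}([0,L])$.

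With this inequality in hand, I would conclude $n(\mathcal{L})\leq 1$ by a dimension count: if $\mathcal{L}$ admitted a two-dimensional negative-definite subspace $W$, the linear functional $v\mapsto \int_0^L \varphi^k v\,dx$ restricted to $W$ would have nontrivial kernel, producing some $0\neq w\in W\cap V$ with $(\mathcal{L}w,w)<0$, contradicting the inequality above. Combined with the Oscillation Theorem---$\varphi'\in\ker(\mathcal{L})$ has exactly two zeroes in $[0,L)$, so $0$ equals $\lambda_1$ or $\lambda_2$, in particular $\lambda_0<0$---this forces $n(\mathcal{L})=1$. Since Lemma \ref{specprop}(i) would yield $n(\mathcal{L})=2$ if $\theta>0$, the case $\theta>0$ is now excluded, and once the remaining degenerate case $\theta=0$ is also ruled out, Lemma \ref{specprop}(ii) gives that the zero eigenvalue is simple, i.e.\ $z(\mathcal{L})=1$, and Lemma \ref{teo1} yields $\frac{\partial L}{\partial B}=-\theta>0$.

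The main obstacle is precisely excluding $\theta=0$, equivalently ruling out the spectral configuration $\lambda_1=\lambda_2=0$, which the variational inequality on $V$ does not, by itself, preclude: if $z(\mathcal{L})=2$ with an additional even element $\phi\in\ker(\mathcal{L})$, self-adjointness together with $\mathcal{L}\varphi=(1-k)\varphi^k$ forces $(1-k)\int_0^L \varphi^k\phi\,dx = (\mathcal{L}\varphi,\phi)_{L^2_{per}} = 0$, so $\phi\in V$ and the bound $(\mathcal{L}v,v)\geq 0$ on $V$ is fully consistent with a double zero. To close this gap I would invoke the uniqueness (modulo translation) of the positive even periodic solution of (\ref{ode}) of a given period: the translation $\varphi'$ already accounts for the unique null direction of the constrained Hessian coming from the symmetry group, so a second kernel element $\phi$ would generate an independent one-parameter family of solutions of the minimization problem in Lemma \ref{lemamin}, contradicting uniqueness. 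With $\theta=0$ excluded, the conclusion $\theta<0$ and hence $\frac{\partial L}{\partial B}>0$ follows from Lemma \ref{specprop}(ii) and Lemma \ref{teo1}.
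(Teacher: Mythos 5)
Your derivation of $n(\mathcal{L})\leq 1$ from the second-order condition for the constrained minimizer, combined with $(\mathcal{L}\varphi,\varphi)_{L^2_{per}}=(1-k)\int_0^L\varphi^{k+1}\,dx<0$ (or with the Oscillation Theorem) to get $n(\mathcal{L})=1$, is essentially the paper's argument, and you correctly identify that the whole difficulty is concentrated in excluding $\theta=0$, i.e.\ $z(\mathcal{L})=2$. Your observation that any even kernel element $\phi$ automatically satisfies $\int_0^L\varphi^k\phi\,dx=0$ and hence lies in the constraint tangent space $V$ is accurate and shows real understanding of why the variational inequality alone cannot close the argument.

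However, the way you close it is a genuine gap. You invoke ``uniqueness (modulo translation) of the positive even periodic solution of a given period'' and claim a second kernel element would ``generate an independent one-parameter family of solutions of the minimization problem, contradicting uniqueness.'' Neither half of this is available: the uniqueness statement is not proved anywhere in the paper and is itself a nontrivial result whose standard proofs typically \emph{use} the nondegeneracy (or the monotonicity of the period map) you are trying to establish, so the argument is at best circular; and even granting uniqueness, an element of $\ker(\mathcal{L})$ is only an infinitesimal deformation --- without a Lyapunov--Schmidt or bifurcation argument it does not produce an actual curve of solutions, so a degenerate kernel does not by itself contradict uniqueness. The paper instead rules out $z(\mathcal{L})=2$ by a concrete orthogonality argument: Lemmas \ref{1range} and \ref{phirange} show $1$, and hence $\varphi^{k-1}$ and $\varphi^k$, lie in ${\rm range}(\mathcal{L})=\ker(\mathcal{L})^{\perp}$; if a second (even, periodic) kernel element $\bar{y}$ existed it would have exactly two symmetric zeros $\pm x_0$ in $[-L/2,L/2)$ by the Oscillation Theorem, and the test function $\varphi^{k-1}(\varphi-\varphi(x_0))$, which has the same sign pattern as $\bar{y}$, would then have nonzero inner product with $\bar{y}$ while being a linear combination of $\varphi^k$ and $\varphi^{k-1}$ --- a contradiction. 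Some such explicit construction (or an equivalent device) is needed where your uniqueness appeal currently sits.
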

\begin{proof}
	Let $k>1$ be fixed. For $L>2\pi$, we have by Lemma $\ref{lemamin}$ that the positive non-constant solution $\varphi$ of the equation $(\ref{ode})$ satisfies the minimization problem $(\ref{minP1})$.\\
	\indent Next, thanks to the Lemma $\ref{lemamin}$, we obtain that $\varphi$ is a minimizer of the functional $G:H_{per}^1([0,L])\rightarrow \mathbb{R}$ given by
	$G(u)=D(u)-\frac{1}{k+1}\int_0^Lu(x)^{k+1}dx$ subject to the same constraint $\int_0^Lu(x)^{k+1}dx=1$. Since $\mathcal{L}=-\partial_x^2+1-k\varphi^{k-1}$ is the Hessian operator for $G(u)$ at the point $\varphi$, we obtain that $n(\mathcal{L})\leq1$. On the other hand, solution $\varphi$ is positive and thus $$(\mathcal{L}(\varphi),\varphi)_{L_{per}^2}=(1-k)\int_0^L\varphi(x)^{k+1}dx<0.$$ We deduce by Courant's min-max characterization of eigenvalues that $n(\mathcal{L})\geq1$ and both results allow to conclude that $n(\mathcal{L})=1$.  Let us assume that $z(\mathcal{L})=2$ for some $B_0\in\left(\frac{1-k}{2(k+1)},0\right)$. By Oscillation's Theorem in \cite{magnus}, it follows that the periodic solution $\bar{y}$ of the Cauchy problem $(\ref{y})$ has exactly two zeroes in the interval $[0,L)$ and by periodicity, also in the interval $\left[-L/2,L/2\right)$. Since $\varphi' \in \ker(\mathcal{L})$ is odd, we see that the periodic function $\bar{y} \in \ker(\mathcal{L})$ is even and it has exactly two symmetric zeroes  in the interval $\left[-L/2,L/2\right)$. Hence, there exists $x_0 \in \left(-L/2,L/2\right)$ such that $\bar{y}(\pm x_0)=0$. Without loss of generality, we can still suppose that
	\begin{equation}\label{positivityh}\bar{y}(x)>0, \; x \in (-x_0, x_0) \quad \text{and} \quad  \bar{y}(x)<0, \; x \in\left [-L/2,-x_0 \right) \cup \left(x_0,L/2\right).
	\end{equation}
	
	Furthermore, by Lemma $\ref{phirange}$ we have $\varphi^{k-1},\varphi^k\in\ker(\mathcal{L})^{\bot}={\rm range}(\mathcal{L})$, so that
	\begin{equation}\label{orthogonalityL1}
		(\bar{y},\varphi^{k-1})_{L^2_{per}}=0 \quad \text{and} \quad (\bar{y},\varphi^{k})_{L^2_{per}}=0.
	\end{equation}
	
	Since $\varphi>0$,  we obtain that
	$
	\varphi(x)^{k-1}(\varphi(x)-\varphi(x_0))
	$
	is positive over $(-x_0,x_0)$ and negative over $\left [-L/2,x_0 \right) \cup \left(x_0,L/2\right)$ and it has the same behaviour as $\bar{y}$ in \eqref{positivityh}. Thus, $(\varphi^{k-1}(\varphi-\varphi(x_0)), \bar{y})_{L^2_{per}} \neq 0$ which leads a contradiction with \eqref{orthogonalityL1}. Consequently, we have $\ker(\mathcal{L})=[\varphi']$.
\end{proof}

 \section*{Acknowledgments}
F. Natali is partially supported by CNPq/Brazil (grant 303907/2021-5) and CAPES MathAmSud (grant 88881.520205/2020-01).

 \section*{Data availability} Data sharing not applicable to this article as no datasets were generated or analysed during the current study.

\section*{Conflict of interest} The authors declare that they have no conflict
of interest.

\end{document}